\newtheorem{Teo}{Theorem}
\newtheorem{df}[Teo]{Definition}
\newtheorem{pro}[Teo]{Proposition}
\newtheorem{rem}[Teo]{Remark}
\providecommand{\keywords}[1]{\textbf{\textit{Keywords: }} #1}
\providecommand{\ack}[1]{\textbf{\textit{Acknowledgements: }} #1}
\providecommand{\MSC}[1]{\textbf{\textit{MSC Classification: }} #1}
\author{C. Moraga Ferr\'{a}ndiz}
\title{The $ s $-cobordism theorem seen as a particular case of Latour's theorem}
\date{}
\newcommand{\longhookrightarrow}{\ensuremath{\lhook\joinrel\relbar\joinrel\rightarrow}}
\newcommand{\longhookleftarrow}{\ensuremath{\leftarrow\joinrel\relbar\joinrel\rhook}}
\DeclareMathOperator{\Id}{Id}
\DeclareMathOperator{\Wh}{Wh}
\DeclareMathOperator{\ev}{ev}
\DeclareMathOperator{\im}{Im} 
\DeclareMathOperator{\irr}{irr}
\DeclareMathOperator{\odd}{odd}
\newcommand{\bN}{\mathbb{N}}
\newcommand{\bR}{\mathbb{R}}
\newcommand{\bS}{\mathbb{S}}
\newcommand{\bZ}{\mathbb{Z}}
\newcommand{\cC}{\mathcal{C}}
\newcommand{\cE}{\mathcal{E}}
\newcommand{\cF}{\mathcal{F}}
\newcommand{\cN}{\mathcal{N}}
\newcommand{\cP}{\mathcal{P}}
\newcommand{\cS}{\mathcal{S}}
\newcommand{\al}{\alpha}
\newcommand{\de}{\delta}
\newcommand{\lam}{\lambda}
\newcommand{\Lam}{\Lambda}
\newcommand{\wh}{\widehat}
\newcommand{\wt}{\widetilde}
\newcommand{\nolpp}{\longhookleftarrow}
\newcommand{\plon}{\hookrightarrow}
\newcommand{\plonn}{\longhookrightarrow}
\newcommand{\Sii}{\Leftrightarrow}
\newcommand{\dif}{\cong}
\newcommand{\grad}{\nabla}
\newcommand{\iso}{\approx}
\newcommand{\p}{\partial}
\newcommand{\prive}{\smallsetminus}
\newcommand{\tens}{\otimes}
\newcommand{\x}{\times}
\newcommand{\V}{\varnothing}
\newcommand{\di}{\displaystyle}
\newcommand{\esp}{\,\,\,}
\newcommand{\neee}{\negmedspace}
\providecommand{\flecur}[1]{\ar@/^#1pc/ }
\providecommand{\norm}[1]{\lVert#1\rVert}
\providecommand{\parent}[1]{\left( #1 \right) }
\providecommand{\bparent}[1]{\bigl( #1 \bigr) }
\providecommand{\emb}[1]{\di \mathop{\plon}^{#1}}
\providecommand{\embb}[1]{\di \mathop{\plonn}^{#1}}
\providecommand{\bmee}[1]{\di \mathop{\nolpp}^{#1}}
\providecommand{\sing}[1]{\left\lbrace #1\right\rbrace }
\newcommand{\quotient}[2]{\raise1ex\hbox{$#1$}\Big/ \lower1ex\hbox{$#2$}}
\newcommand{\lift}[4]{\[ 
   \xymatrix{
  & \wt{#2} \ar[d]^{#4}  \\
#1 \ar[r]^#3 \ar@{.>}[ur]^{\wt{#3}} & #2
} 
\]}
\newcommand{\univcolim}[9]{
\[ \xymatrix{
 #1 \ar[rr]^{#4}_{#7}\ar[rd]^{#5}_{#8} & & #2 \\
  & #3 \ar@{.>}[ur]^{#6}_{#9} &
}
\] }
\begin{document}
\maketitle
\begin{abstract}
We show how Latour's theorem (\neee\cite{latour}) can be understood as a natural generalization of the $ s $-cobordism theorem for cohomology classes $ u\in H^1(M;\bR) $. The $ s $-cobordism theorem becomes a special degenerate case when $ u=0 $. 
\end{abstract} 
\keywords{Whitehead torsion, $ s $-cobordism, Latour's theorem, Morse-Novikov theory}\\
\MSC{57R80; 19J10}

\section{The $ s $-cobordism theorem: the exact case}
Two connected, closed and oriented manifolds $ N_0^n,N_1^n $ are cobordant if there exists a compact oriented manifold $ W^{n+1} $ such that $ \p W^{n+1}=(-N_0^n)\sqcup (N_1^n) $. Superscripts denote dimension while $ (-N) $ represents the manifold $ N $ with reversed orientation. Such a triad $ (W;N_0,N_1) $ is said to be an $ h $-cobordism if both inclusions $ N_0\embb{i_0} W\bmee{i_1} N_1 $ are homotopy equivalences.\\
Let $ \pi $ be the fundamental group of $ W $; we denote by $ \Lambda:=\bZ[\pi] $ its group ring. To each $ h $-cobordism we can associate its torsion $ \tau (W,N_0) $ which lives in the Whitehead group $ \Wh(\pi):= \frac{K_1(\Lambda)}{\pm\pi} $ (see \cite{cohen} for a definition).\\

The $ s $-cobordism theorem, which can be found in \cite{kervaireBMS}, states that $ \tau (W,N_0)=0 $ is a sufficient condition\footnote{Trivially, the condition $ \tau (W,N_0)=0 $ is also necessary for $ W\dif N_0\x [0,1] $.} for $ W $ being diffeomorphic to $ N_0\x [0,1] $, provided $ n\geq 5 $.\\

We can reformulate this theorem into a statement about non triviality of some functional space: consider $ \cF $ the space of $ \cC^{\infty} $-functions $ f:W\to [0,1] $ such that $ f^{-1}(i)=N_i, i=0,1 $ with the $ \cC^{\infty}-$topology. Its subspace $ \cE $ consisting of functions without critical points is non-empty if and only if $ W\dif N_0\x [0,1] $, as it suffices to pick some $f\in\cE $ and to integrate the vector field $ \frac{\grad f}{\norm{\grad f}}$ relative to some Riemannian metric on $ W $ in order to find a diffeomorphism from $ N_0\x [0,1] $ to $ W $. We obtain so:\\

\begin{Teo}[\emph{Functional formulation of the $ s $-cobordism theorem}]\label{th:s-cob}
Let $ n\geq 5 $,
\[ \cE\neq\V \esp\Sii \esp\tau (W,N_0)=0  \]\qed
\end{Teo}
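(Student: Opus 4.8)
The plan is to treat the two implications separately. The implication $\cE\neq\V\Rightarrow\tau(W,N_0)=0$ is the elementary one already sketched before the statement: a function $f\in\cE$ integrates, via $\frac{\grad f}{\norm{\grad f}}$ for any Riemannian metric on $W$, to a diffeomorphism $N_0\x[0,1]\dif W$; since Whitehead torsion is a diffeomorphism invariant and vanishes on the trivial product $h$-cobordism, $\tau(W,N_0)=0$. For the converse I would run the classical handle-trading proof of the $s$-cobordism theorem (\cite{kervaireBMS}), rephrased so that its output is a point of $\cE$ rather than an abstract diffeomorphism.

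First, by Morse theory $\cF$ is non-empty, and one may choose $f_0\in\cF$ to be a self-indexing Morse function. This endows $W$ with a handle decomposition relative to $N_0$, with one $k$-handle per index-$k$ critical point, and an associated based free chain complex $C_*$ over $\Lambda=\bZ[\pi]$. Because $(W;N_0,N_1)$ is an $h$-cobordism, $H_*(\wt W,\wt N_0)=0$, so $C_*$ is acyclic; its torsion in $\Wh(\pi)$ is, by the definition recalled above, $\tau(W,N_0)$.

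The geometric engine consists of three moves on handle decompositions, all valid since $\dim W=n+1\geq 6$: (i) cancellation of an adjacent index-$k$/index-$(k+1)$ pair whose attaching sphere meets the belt sphere transversally in one point; (ii) handle slides, which realize elementary column operations on $C_*$ and — crucially — the Whitney trick, allowing one to isotope attaching spheres so that an \emph{algebraically} cancelling pair becomes \emph{geometrically} cancelling (this is where $n\geq 5$ enters, to embed the Whitney disk inside the $n$-dimensional level sets); (iii) insertion and deletion of trivial cancelling pairs, realizing stabilization of $C_*$. Using (i)--(iii) together with the $h$-cobordism hypothesis, one first trades away all handles of index $0$ and $1$, and dually, viewing $f_0$ upside down, all handles of index $n$ and $n-1$, leaving only handles of index $2\le k\le n-2$. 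Now assume $\tau(W,N_0)=0$. Then, after stabilizing and changing bases by products of elementary matrices and permutations, $C_*$ can be brought to a form in which the handles pair off into algebraically cancelling $(k,k+1)$-couples; applying move (ii) to each couple and then move (i) removes all the handles. The resulting $f\in\cF$ has no critical point, hence $f\in\cE$ and $\cE\neq\V$.

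The main obstacle is the step that converts algebra into geometry in the middle range: arranging, by handle slides, that the attaching sphere of the $(k+1)$-handle and the belt sphere of its paired $k$-handle become disjoint except at a single transverse intersection. This is precisely the Whitney-trick step, and it forces $n\geq 5$; one must also check the $\pi_1$-condition that makes the Whitney disk embed (the relevant spheres being, in the reduced index range, simply connected in their ambient level set because $W$ is an $h$-cobordism). Eliminating the index-$0,1$ handles (and dually the index-$n-1,n$ handles) is comparatively routine but likewise relies on $n\geq 5$.
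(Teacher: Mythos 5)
Your proposal is correct, but it does considerably more work than the paper intends: the paper states this theorem with no proof of its own, treating it as a \emph{reformulation} of the classical $s$-cobordism theorem cited from \cite{kervaireBMS}. The paper's implicit argument is the two-line reduction $\cE\neq\V \Sii W\dif N_0\x[0,1]$ --- the direction $\Ra$ being the gradient-flow integration sketched just before the statement, and $\La$ being the observation that the projection $N_0\x[0,1]\to[0,1]$ transported through a diffeomorphism lies in $\cE$ --- after which one quotes $W\dif N_0\x[0,1]\Sii\tau(W,N_0)=0$ as a black box. Your forward implication coincides with this. For the converse you instead re-run the entire handle-trading proof so that its output is literally a critical-point-free function; this is legitimate and self-contained, and your sketch follows the standard lines (normal form in two adjacent middle indices, realization of elementary operations and stabilization by handle slides and trivial pairs, Whitney trick to pass from algebraic to geometric cancellation, with $n\geq 5$ entering there). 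Two small imprecisions worth tightening if you keep this route: the Whitney trick is a separate geometric move from handle slides, not a consequence of them, and the hypothesis it needs is not simple connectivity of the spheres in the level set but that the intermediate level sets include into $W$ by $\pi_1$-isomorphisms (which is what the prior elimination of index $0,1$ and $n,n+1$ handles secures). Neither point is a gap in substance, only in formulation; the economical alternative is simply to cite the classical theorem as the paper does.
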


\begin{rem}\label{rem:u=0}
{\rm The relative homology $ H_*(W,N_0) $ vanishes since $ i_0 $ is a homotopy equivalence. This is indeed a necessary condition for $ \cE\neq\V $, since the Morse complex $ C_*(f) $ of a Morse function $ f\in\cE $ is zero in every degree, and the homology of $ C_*(f) $ is isomorphic to $ H_*(W,N_0) $ (see \cite{milnorMorse}).\\
By Lefschetz duality, we deduce that $ H^1(W,N_1)\iso H_n(W,N_0) $ vanishes. The same holds for $ H^1(W,N_0)\iso H_n(W,N_1) $ by using the fact that $ i_1 $ is also a homotopy equivalence.}
\end{rem}

We are going to consider the $ s $-cobordism theorem and the one from Latour as statements about the relative cohomologies $ H^1(X;Y) $ and $ H^1(X;Z) $ of a triad $ (X;Y,Z) $. Since the only relative cohomology class of degree $ 1 $ to consider in the case of an $ h $-cobordism is $ u=0 $, we will talk about \emph{the exact case} to refer to the context of this section.

\section{The theorem of Latour}\label{sec:Lat}

Consider now a closed manifold $ M^{n+1} $. We ask $ M $ to fiber over the circle $ \bS^1 $, which is equivalent by Tischler's theorem \cite{tischler} to the existence of a non-singular closed $ 1 $-form on $ M $.\\

We say that a cohomology class is non-singular if it is representable by a non-singular closed $1$-form. It is clear that there is no chance for $u=0\in H^1(M;\bR) $ to be non-singular since $M$ is closed. Latour's theorem characterizes degree one de Rham cohomology classes $0\neq u $ that are non-singular. Within the context of this section, here is the statement:

\begin{Teo}[\neee\cite{latour}]\label{th:Lat}
Let $ n\geq 5 $, and let $ \Omega^u_{NS} $ denote the space of non-singular closed $ 1 $-forms representing $ u $. We have:
\[ \Omega^u_{NS}\neq \V \esp\Sii\esp\left\lbrace \begin{array}{l}
H_*(M,-u)=0,\\
\tau(-u)=0,\\
u\text{ and } -u \text{ are stable.}
\end{array} \right. \,  \]\qed
\end{Teo}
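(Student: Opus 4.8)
The plan is to prove the two implications separately, viewing the statement as the Morse--Novikov counterpart of Theorem~\ref{th:s-cob}. Necessity is the easy direction: if $\omega$ is a non-singular closed $1$-form representing $u$, it is in particular a Morse $1$-form \emph{with no zeros}, so the Novikov complex of $(M,-u)$ built from $\omega$ (and any adapted gradient-like field) over the Novikov ring $\wh{\bZ[\pi]}_{-u}$ is the zero complex; hence $H_*(M,-u)=0$ and $\tau(-u)=0$ for the trivial reason that there is no complex. For stability: Tischler's construction (\cite{tischler}) shows that, after a $C^0$-small perturbation, $M$ is the mapping torus of a diffeomorphism of a closed manifold $N^n$, so the associated infinite cyclic covering is a product $N\x\bR$ with tame ends --- which is exactly what stability of $u$ and of $-u$ record, so both hold.

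For sufficiency one must manufacture a non-singular representative of $u$ out of the three hypotheses, and this is the bulk of the work. First I would choose a Morse closed $1$-form $\omega_0$ in the class $u$, together with an adapted gradient-like vector field $\xi_0$ that is Morse--Smale on the relevant covering, so that the Novikov complex $C_*(\omega_0,\xi_0)$ over $\wh\Lambda:=\wh{\bZ[\pi]}_{-u}$ is defined; the hypothesis $H_*(M,-u)=0$ says it is acyclic and $\tau(-u)=0$ says it is \emph{simply} acyclic (chain contractible by a contraction of trivial Whitehead torsion over $\wh\Lambda$). The heart of the argument is then a finite sequence of elementary moves in the space of Morse $1$-forms representing $u$ --- births and deaths of cancelling pairs of zeros of consecutive index, and handle slides --- which on the chain level realise stabilisations and elementary row/column operations over $\bZ[\pi]$ and their images over $\wh\Lambda$. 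Using these, after clearing the extreme indices (below), I would sweep all remaining zeros into two consecutive middle indices $k,k+1$ with $2\le k\le n-2$, bring the complex to the shape $\wh{\Lambda}^{\,m}\xra{\sim}\wh{\Lambda}^{\,m}$ after stabilisation, use simplicity to reduce this isomorphism to the identity by further handle slides, and finally apply the Whitney trick --- this is where $n\ge5$ enters, exactly as in Theorem~\ref{th:s-cob} --- to convert the algebraic cancellations into geometric ones, killing all zeros and producing the sought non-singular form.

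The delicate point, and the genuinely new feature relative to the degenerate case $u=0$, is the disposal of the zeros of extreme index $0,1$ and, dually, $n,n+1$. A lone index-$0$ zero of a closed $1$-form is a center and cannot be cancelled against nothing; it must be paired with an index-$1$ zero and cancelled geometrically, which forces the incidence data between the index-$1$ and index-$0$ zeros to be reducible to elementary form \emph{over the group ring $\bZ[\pi]$ itself}, not merely over its Novikov completion $\wh\Lambda$. This reducibility follows neither from acyclicity nor from $\tau(-u)=0$: it is precisely what \emph{stability of $u$} supplies, controlling the $\pi_1$-theoretic obstruction to merging and eliminating the low-index zeros, while stability of $-u$ handles indices $n,n+1$ via the same argument applied to $-\omega_0$. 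I therefore expect the main obstacle to be the precise formulation and deployment of stability at this step, together with the control of Novikov flow lines --- whose counting coefficients are a priori infinite series and must be kept finite, by a careful choice of $\xi$, wherever they feed into a cancellation. Once the extreme indices are cleared, the middle-index cancellation above concludes the proof; and Theorem~\ref{th:s-cob} is recovered when $u=0$ by observing (Remark~\ref{rem:u=0}) that the Novikov complex may then be taken to be zero from the outset and both stability conditions are vacuous, so that only $\tau=0$ survives.
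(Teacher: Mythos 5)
First, a point of comparison: the paper does not prove Theorem~\ref{th:Lat} at all --- it is quoted from \cite{latour} with an immediate \verb|\qed|, and the only thing the paper offers is the informal summary of Latour's strategy in \S\ref{ssec:stab}. So your proposal can only be measured against that summary and against Latour's actual argument, not against a proof contained in this paper. Measured that way, your overall architecture is the right one (necessity by taking the Morse--Novikov complex of a zero-free form; sufficiency by births/deaths, handle slides realizing operations over the Novikov ring, reduction to adjacent indices, and the Whitney trick for $n\ge 5$), and you correctly identify that the two algebraic hypotheses alone mimic the $s$-cobordism argument while stability is the genuinely new ingredient. Note also that your necessity step tacitly uses the nontrivial fact that the Morse-theoretic Novikov complex of a representative computes $H_*(M,-u)$ and $\tau(-u)$ as defined from the simplicial complex of the universal cover; this is itself one of Latour's theorems and should be cited rather than treated as definitional. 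Your stability-from-Tischler argument is also loose: it only literally applies to rational classes, whereas for irrational $u$ the relevant cover is a $\mathbb{Z}^{\mathrm{irr}(u)}$-cover and one should instead argue directly with a primitive of the pullback of the non-singular form.

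The concrete disagreement is where you locate the use of stability. You assert that stability is what permits the elimination of the index-$0$ and index-$1$ zeros, by forcing the $(1,0)$-incidence matrix to be reducible over $\mathbb{Z}[\pi]$ rather than over the Novikov completion. That is not how Latour's proof (nor the paper's summary) goes: the eliminations at index $0$ and $1$ (and dually $n{+}1$ and $n$) are carried out by trading zeros of index $i$ for zeros of index $i+2$ using only acyclicity and simplicity over $\Lambda_{-u}$, exactly as in the $s$-cobordism theorem; the paper states explicitly that the special case is $i=2$ (and coindex $2$). Stability of $\pm u$ is what controls the fundamental groups of the unbounded pieces $\widehat{M}_t$ of the covering, i.e.\ the $\pi_1$ of the relevant level sets, and this is needed precisely to perform the index-$2$/coindex-$2$ step and the subsequent Whitney tricks in a non-compact level set --- an obstruction that is invisible in the exact case, which is the whole point of Proposition~\ref{pro:0-stability}. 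As written, your sketch would leave the index-$2$ step unjustified while invoking stability where it is not needed, so the sufficiency direction has a genuine gap even as a program.
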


Notice that a $\al\in\Omega^u_{NS}$ determines a whole ray $ru=[r\al],r\in\bR^*$ of non-singular cohomology classes. These form so a cone into $H^1(M;\bR)$. In particular $\Omega^u_{NS}\neq \V \Sii \Omega^{-u}_{NS}\neq \V$.\\

A degree one cohomology class can be seen as a morphism $ u:\pi\to\bR  $ just by integrating representatives of loops in $ M $. The Novikov ring associated to $ u $, denoted by $ \Lambda_u $, is a completion of the group ring $ \Lambda $. Elements of $ \Lam_u $ are formal sums $ \lam:=\sum n_ig_i,n_i\in\bZ $ such that, for every fixed $ C\in\bR $, there are only finitely many terms $ g_i $ verifying $ u(g_i)<C $. The homology $ H_*(M;-u) $ which appears in Latour's theorem is the Novikov homology, which was first constructed in \cite{novikovMulti}. The Novikov complex is the free finite $ \Lam_{-u} $-module $ \parent{\cN^{-u}_*:=\Lam_{-u}\tens_{\Lam}S_*(\wt{M}),\p_*} $, where $ S_*(\wt{M}) $ denotes the simplicial/cellular chain complex of the universal cover of $ M $ associated to a given triangulation/cell structure on $ M $.

\begin{rem}\label{rem:signe}
{\rm Is important to notice that Latour's theorem, which is a property of the cohomology class $u$, is stated in terms related to $\Lambda_{-u}$-modules.}
\end{rem}

The second right-side condition of theorem \ref{th:Lat} contains indeed the first: in order to define the torsion $ \tau(-u) $, we need the Novikov complex to be acyclic. In this case, $\tau(-u)$ is defined as follows: by setting a base of $ \cN^{-u}_* $, we obtain a contraction $ \de_*:\cN^{-u}_*\to\cN^{-u}_{*+1} $ as in \cite[\S4]{maumaryTypesimple}. The map $ \parent{\p+\de}_*:\cN^{-u}_{\ev}\to\cN^{-u}_{\odd} $ is then an isomorphism and we can consider $ S $, the class in $ K_1(\Lam_{-u}) $ of its associated matrix in the fixed basis. This class may depend on the choice of the basis (compare to \cite[\S7]{milnorWhtorsion}); in order to remove this indeterminacy, Latour defined the Whitehead group associated to $ -u $ as $ \Wh(-u):=\frac{K_1(\Lambda_{-u})}{T_{-u}}$, where the class $ [S] $ depends only on $ -u $. Here, $ T_{-u}:=\pm\pi\cdot\parent{1+\bparent{u<0}}\subset \Lambda_{-u}^{\x} $ is the subgroup of the so-called trivial units.\\
The torsion $\tau(-u)$ is defined by $[S]\in\Wh(-u) $.\\

An explanation about the stability condition of $\pm u$ is postponed to subsection \ref{ssec:stab}.\\

As he pointed out in his introduction, Latour's strategy to prove theorem \ref{th:Lat} is similar to that of the $ s $-cobordism theorem; the goal of the present paper is to show that Latour's theorem is indeed a natural generalization of $ s $-cobordism theorem for relative cohomology classes.

\section{A generalization framework}\label{sec:framework}
In Latour's theorem, the notion of $ u $-stability is related with \emph{unbounded} primitives of $ p^*(u) $ where $ p:\wh{M}\to M $ is the abelian cover of $ u $ having $ \pi_1(\wh{M}) $ equal to $ \ker(u) $. If we try to extend this notion to a null class $ u=0 $, the cover coincides with $ \Id: M\to M $ and we have no unbounded primitives of $ 0 $.\\
However, we only want to extend the notion of $ u $-stability for null classes of the relative $ 1 $-cohomology of an $ h $-cobordism.  We replace so the notion of $ h $-cobordism in the most trivially possible way in order to have unbounded primitives in the exact context when $ u=0\in H^1(W,N_0) \cup H^1(W,N_1)$:

\begin{df}\label{df:ext-triad}
From any $ h $-cobordism $ (W;N_0,N_1) $, we construct the triad $ (W_{\pm};N_-,N_+) $ by setting:
\begin{itemize}
\item $ N_-:=N_0\x(-\infty,0], N_+:=N_1\x [1,\infty) $ and
\item $ W_{\pm}:=N_-\coprod\limits_{\Id_{N_0}} W \coprod\limits_{\Id_{N_1}} N_+ $.
\end{itemize}
We call $ (W_{\pm};N_-,N_+) $ the \emph{extended triad} of $  (W;N_0,N_1) $.
\end{df}

In particular the cohomologies of an $ h $-cobordism and of its extended triad are the same and $ W $ is trivial if and only if $ W_{\pm} $ is diffeomorphic to $ N_0\x\bR $. We can so state the $ s $-cobordism theorem in terms of extended triads.

\begin{rem}\label{rem:simply}
{\rm Of course, the extended triad is not strictly an $ h $-cobordism since $ W_\pm $ has no boundary, but the inclusion $ i:(W;N_0,N_1)\plon (W_{\pm};N_-,N_+) $ is nevertheless a simple homotopy equivalence: any cell of, say $ N_- $, is of the form $ \Delta\x\bR^-  $ where $ \Delta $ is a cell of $ N_0 $ and we have a natural collapse $ c:N_-\to N_0 $.}
\end{rem}

\section{Comparison of the two theorems}\label{sec:Comparison}
Let us study how Latour's conditions relative to $ u\in H^1(M;\bR)\prive\sing{0} $ of closed manifolds $ M $ degenerate to the $s$-cobordism theorem condition for extended triads of $ h $-cobordisms $ (W;N_0,N_1)  $ as in section \ref{sec:framework}.\\

Firstly, regard the closed manifold $M$ as the triad $(M;\V_-,\V_+)$ and the cohomology class as living in $u\in H^1(M;\bR)=H^1(M,\V_-;\bR)$. Latour's conditions applied to $-u$ should be regarded as a statement about $-u\in H^1(W,\V_+;\bR)$ since in this case, the associated Novikov complex is constructed using $\Lam_u$-modules instead of $\Lam_{-u}$-modules.\\

Secondly, consider the $ h $-cobordism replaced by its extended triad $ (W_{\pm};N_-,N_+) $ as in definition \ref{df:ext-triad}. We distinguish the null-elements of the relative cohomologies by setting $ H^1(W_{\pm},N_-)=\sing{+0} $ and $ H^1(W_{\pm},N_+)=\sing{-0} $.\\

Now we study what happens to Latour's conditions when they are interpreted relatively to the extended triad $ (W_{\pm};N_-,N_+) $ for $ u=+0\in H^1(W_{\pm},N_-) $:

\begin{itemize}
\item The Novikov homology $ H_*\bparent{(W_{\pm},N_-),-0} $ is computed from the complex $ \cN^{-0}_*$. This complex is $\Lam_{-0}\tens_{\Lam}\cS_*(\wt{W_{\pm}},\wt{N_-}) $ by definition, but the ring $ \Lam_{-0} $ trivially coincides with the group ring $ \Lam $ and hence the Novikov complex $ \cN^{-0}_* $ is nothing but $ \cS_*(\wt{W_{\pm}},\wt{N_-}) $. So $ H_*\bparent{(W_{\pm},N_-),-0}=H_*(\wt{W_{\pm}},\wt{N_-})$ which is isomorphic to $H_*(\wt{W},\wt{N_0})=0 $ since both pairs are homotopy equivalent.\\
The first condition of Latour is so trivially true for $ h $-cobordisms as we have noticed on remark \ref{rem:u=0}. 
\item Since the set of trivial units $ T_{-0}=\pm\pi $, the group $ \Wh(-0) $ defined by Latour reduces to the usual Whitehead group $ \Wh(\pi) $. The torsion $ \tau(-0) $ is $ \tau(W_{\pm},N_-) $, since  $\cN_*^{-0}=\cS_*(\wt{W_{\pm}},\wt{N_-}) $. But the latter torsion coincides with the  Milnor torsion $ \tau(W,N_0)  $ since the pairs $ (W_{\pm},N_-) $  and $ (W,N_0) $ are simply homotopy equivalent by remark \ref{rem:simply}.\\
The condition $ \tau(-0)=0 $ of Latour is so the equivalent condition of theorem \ref{th:s-cob} for an $ h $-cobordism to be trivial.
\end{itemize}

\begin{rem}
{\rm The corresponding statements about $ u=-0\in H^1(W_{\pm},N_+) $ yield the vanishing of the relative homology $ H_*(\wt{W},\wt{N_1}) $ and associated torsion $ \tau(W,N_1) $, which is an equivalent formulation of the $ s $-cobordism theorem.}
\end{rem}

Note that the previous observations do not need the notion of extended triad and can be applied to the $ h $-cobordism $ (W;N_0,N_1) $ directly. We have established so far that the first two conditions of Latour's theorem reduce to theorem \ref{th:s-cob} when applied to an $ h $-cobordism or to its extended triad. We need so to prove that the third condition relative to stability holds trivially when reducing to $ u=\pm 0 $. This will be proved below in proposition \ref{pro:0-stability}, where the convenience of the concept of extended triad will become more apparent. 

\subsection{The stability condition}\label{ssec:stab}
To prove his theorem, Latour showed that every Morse closed $ 1 $-form $ \al $ representing $ u $ gives raise to a complex $ C_*(\al) $ of $ \Lam_{-u} $-modules which is simply equivalent to the Novikov complex $ \cN_*^{-u} $. The two first conditions that we have analized allow one to proceed as in the $ s $-cobordism theorem in order to recurrently eliminate zeros of index/coindex $ i $ by eventually adding zeroes of index/coindex $ i+2 $, apart from the case $ i=2 $ which is special. Adding $ \pm u $-stability, Latour obtained a sufficient condition to handle with this special case (compare with \cite{damian}). Since critical points of index/coindex $ 2 $ do not represent a natural obstruction in the exact case, $ \pm 0 $-stability should hold trivially. Let us recall what $ u $-stability means, as in \cite[\S 5.4]{latour}:\\

Consider $ p:\wh{M}\to M $ the covering whose fundamental group is $ \ker u $. Its transformation group is $ \frac{\pi}{\ker u}\iso\bZ^{\irr(u)} $. Since the class $ p^*(u) $ vanishes, any closed $ 1 $-form $ \al $ representing $ u $ admits a \emph{primitive}: a function $ \wh{f}:\wh{M}\to\bR $ verifying $ d\wh{f}=p^*(\al) $ and $ \wh{f}(g\cdot x)=u(g)+\wh{f}(x) $ for every pair $ (g,x)$ in $ \bZ^{\irr(u)}\x\wh{M}$. It is easy to see that for every $ t\in\bR, \wh{f}^{-1}\bparent{[t,\infty)} $ has only one connected component where $ \wh{f} $ is not bounded; denote it by $ \wh{M}_t $. The inclusions $ \bparent{\wh{M}_s\emb{i^s_t} \wh{M}_t}_{s>t} $ induce a projective system $ \cP(u):=\parent{\pi_1(\wh{M}_t)}_{t\in\bR} $.\\
Latour showed that this system does not depend on the choice of $\wh{f}$ but only on $ u $, up to projective isomorphism (see \cite[Lemme 5.7]{latour}). The $ u $-stability is a condition about $ \cP(u) $.

\begin{df}\label{df:u-stability}
A cohomology class $ u\in H^1(M;\bR) $ is \emph{stable} if there exists an increasing sequence $ (t_n)_{n\in\bN}\to\infty $ where the restrictions to the images of $ \cP(u) $ are isomorphisms. More precisely, if we set $ I_n:=\im\bparent{(\pi_1)_*(i^{t_{n+1}}_{t_n})} $ and  $j_n:=(\pi_1)_*(i^{t_{n+1}}_{t_n})|_{I_{n+1}}  $, then $ j_{n}:I_{n+1}\to I_n $ are isomorphisms for every $ n\in\bN $.
\end{df}

The next proposition shows how $ u $-stability reduces to a condition which holds trivially for extended triads of $ h $-cobordisms.

\begin{pro}\label{pro:0-stability}
The extended triad  $ (W_{\pm};N_-,N_+) $ of any $ h $-cobordism is $ \pm 0 $-stable.
\end{pro}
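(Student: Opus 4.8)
The plan is to verify Definition~\ref{df:u-stability} directly for each of the two classes $+0\in H^1(W_\pm,N_-)$ and $-0\in H^1(W_\pm,N_+)$, and to observe that the associated projective system is not merely eventually pro-isomorphic but is eventually made of genuine isomorphisms, whence stability is immediate.

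First I would unwind the definitions in the degenerate case. Since $\ker(\pm 0)=\pi$, the cover $p$ of \cite[\S 5.4]{latour} is the identity $W_\pm\to W_\pm$, its transformation group $\bZ^{\irr(0)}=\bZ^{0}$ is trivial, and a \emph{primitive} of a closed $1$-form $\al$ representing $\pm 0$ is just a function $f\colon W_\pm\to\bR$ with $df=\al$ (the equivariance constraint being vacuous). By \cite[Lemme 5.7]{latour} the system $\cP(\pm 0)$ is independent of the chosen primitive, so I am free to use a convenient one — in particular it need not be the non-singular form we are trying to produce, and it may well have critical points inside $W$. For $+0$ I take $f$ restricting on $N_-=N_0\x(-\infty,0]$ to the projection onto $(-\infty,0]$, on $N_+=N_1\x[1,\infty)$ to the projection onto $[1,\infty)$, and on $W$ to some $f_W\colon W\to[0,1]$ with $f_W^{-1}(i)=N_i$; such an $f$ exists by gluing.

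The main point — and the place where passing to the extended triad does the work foreshadowed just before the proposition — is that for a compact $h$-cobordism $f$ would be bounded, so ``the unique component of $f^{-1}([t,\infty))$ on which $f$ is unbounded'' would be meaningless; the half-infinite collars $N_\pm$ are exactly what supply honest unbounded primitives. With the $f$ above, for every $t\ge 1$ one has $f^{-1}([t,\infty))=N_1\x[t,\infty)$, which is connected with $f$ unbounded on it, so this is the set $\wh{W_\pm}_t$ (in the notation of \S\ref{ssec:stab}, with $M=W_\pm$). For $s>t\ge 1$ the inclusion $N_1\x[s,\infty)\hookrightarrow N_1\x[t,\infty)$ is a deformation retract, hence a $\pi_1$-isomorphism. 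Choosing $t_n=n$ for $n\ge 1$, every structure map $(\pi_1)_*(i^{t_{n+1}}_{t_n})$ is an isomorphism, so $I_n=\pi_1(\wh{W_\pm}_{t_n})$ and $j_n\colon I_{n+1}\to I_n$ is that same isomorphism: $+0$ is stable. For $-0\in H^1(W_\pm,N_+)$ the argument is identical with $f$ replaced by $-f$ (a primitive of the corresponding form): for $t\ge 0$ one has $(-f)^{-1}([t,\infty))=f^{-1}((-\infty,-t])=N_0\x(-\infty,-t]$, the inclusions $N_0\x(-\infty,-(n+1)]\hookrightarrow N_0\x(-\infty,-n]$ are deformation retracts, and the same conclusion follows with $t_n=n$, $n\ge 0$. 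Hence $(W_\pm;N_-,N_+)$ is $\pm 0$-stable.

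I do not expect a genuine obstacle here: once the top (resp.\ bottom) super-level sets of the chosen primitive are identified with products $N_i\x(\text{half-line})$, the isomorphism claim is nothing but the obvious deformation retraction, and the sequence $t_n=n$ does the rest. The only point requiring care is conceptual rather than technical — one must first move to the extended triad so that the defining condition of $\wh{W_\pm}_t$ is non-vacuous, and then choose the primitive so that its behaviour on the two collars is manifestly a product.
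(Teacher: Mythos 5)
Your proof is correct and follows essentially the same route as the paper: choose a primitive equal to the collar coordinate on $N_\pm$, observe that the unbounded super-level sets are eventually the product $N_i\x(\text{half-line})$ so the projective system $\cP(\pm 0)$ is eventually constant with identity structure maps, and pick $t_n=n$. The only (cosmetic) divergence is the sign convention — the paper attaches the super-level sets near $N_+$ to the class $-0\in H^1(W_{\pm},N_+)$ rather than to $+0$ — but since both ends are treated symmetrically this does not affect the argument.
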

\begin{proof}
Let us deal with $(-0)$-stability. Here $ \sing{-0}=H^1(W_{\pm},N_+) $. In this situation $ \ker(-0) $ is identified with the whole $ \pi_1(W_{\pm},N_+) $, and the covering pair  $ (\wh{W_{\pm}},\wh{N_+}) $  to consider coincides with the pair  $ (W_{\pm},N_+) $ itself. By relative de Rham theory (see \cite[Ch.1,\S6]{bott+tu} for example), the class $ -0 $ is represented by the pair $ (df,f|_{N^+}) $ with $ f:W_{\pm}\to\bR $. We are free to choose $ f $ verifying $ f(x,t)=t$ for every $ (x,t)\in N_+\cup N_- $; since $ W\plon W_{\pm} $ is compact, there exists some $ 1\leq t_0\in\bR $ such that for every $ t\geq t_0 $, the unique unbounded component $ W_t $ of $ f^{-1}\bparent{[t,\infty)} $ equals $ N_1\x [t,\infty) $. The projective system $ \pi_1(W_t) $ is constantly $ \pi_1(N_1) $ with inclusions inducing the identity if $ t\geq t_0 $. By choosing any increasing sequence $ (t_n) $ starting at $ t_0 $, stability for the class $ -0  $ holds.
\end{proof}
\vspace{1cm}
\ack{This paper was conceived and written at Fall 2013, during a \emph{JSPS Short-term} post-doctoral fellowship. The author would like to express his gratitude to the Graduate School of Mathematics of Tokyo for their warm welcome and for having provided an excellent research environment.}
\newpage
\bibliography{Biblio}{}
\bibliographystyle{alpha}

{\sc Graduate School of Mathematical Sciences, the University of Tokyo\\
3-8-1 Komaba Meguro-ku, Tokyo 153-8914, Japan.\\
\textit{Telephone:} (+81) (0)3-5465-8292}\\
\textit{E-mail address:} \href{mailto:carlos@ms.u-tokyo.ac.jp}{carlos@ms.u-tokyo.ac.jp}

\end{document}